\newtheorem{theorem}{Theorem}
\newtheorem{corollary}[theorem]{Corollary}
\newtheorem{lem}{Lemma}
\newtheorem{defn}{Definition}
\newtheorem{ex}{Example}
\newtheorem{remark}{Remark}[section]
\title{Approximating fixed points of enriched nonexpansive mappings by Krasnoselskij iteration in Hilbert spaces}
\author{Vasile BERINDE}
\begin{document}
\maketitle \pagestyle{myheadings} \markboth{Vasile Berinde} {Approximating fixed points of enriched ...}
\begin{abstract}
Using the technique of enrichment of contractive type mappings by Krasnoselskij averaging, presented here for the first time, we introduce and study the class of {\it enriched nonexpansive mappings}  in Hilbert spaces. In order to approximate the fixed points of enriched nonexpansive mappings we use the Krasnoselskij iteration for which we prove strong and weak convergence theorems. Examples to illustrate the richness of the new class of contractive mappings  are also given. 

Our results in this paper extend some classical convergence theorems established by Browder and Petryshyn in [Browder, F. E., Petryshyn, W. V., {\it Construction of fixed points of nonlinear mappings in Hilbert space}, J. Math. Anal. Appl. {\bf 20} (1967), 197--228.] from the case of nonexpansive mappings to that of enriched nonexpansive mappings, thus including many other important related results from literature as particular cases.
\end{abstract}

\section{Introduction}

Let $K$ be a nonempty subset of a real normed linear space $X$. A map $T:K\rightarrow K$ is called nonexpansive if 
$$
\|Tx-Ty\|\leq \|x-y\|, \forall x,y\in K.
$$
An element $x\in K$ is said to be a {\it fixed point} of $T$ is $Tx=x$. Denote by $Fix\,(T)$ the set of all fixed points of $T$.  

Nonexpansive mappings are a limit case of Picard-Banach contractions, i.e., of mappings $T:K\rightarrow K$ which satisfy, in the case of normed linear spaces,  a contraction condition of the form
$$
\|Tx-Ty\|\leq c\cdot \|x-y\|, \forall x,y\in K,
$$
where $c\in [0,1)$ is the contraction coefficient. 

Banach contraction mapping principle, see for example \cite{Ber07}, assures that  any contraction $T$ on a Banach space has a unique fixed point which is the limit of the sequence of its iterates $\{T^n x_0\}$, for any $x_0\in K$. A similar assertion for nonexpansive mappings is not more true. 

Indeed, if $K$ is a closed nonempty subset of a Banach space $X$ and $T:K\rightarrow K$ is nonexpansive, it is known that $T$ may not have a fixed point or it may have many fixed points, and third, it may may happen that, even if $T$ has a unique fixed point, the Picard iteration $\{x_n=T^n x_0\}$  may fail to converge to
such a fixed point. One of the simplest examples of such a map is $Tx=1-x$ on $[0,1]$ with the usual norm, which gives, for $x_0=1$ say, $x_{2n}=1$ and $x_{2n+1}=0$. Also, rotation about the origin of the unit disk in the plane is another example of nonexpansive mapping having a unique fixed point while $\{x_n=T^n x_0\} (x_0\neq 0)$ does not converge. 

These aspects made the study of nonexpansive mappings one the major and most active research areas of nonlinear analysis since the mid-1960's. We mention the early contributions in this respect, due to Mann \cite{Mann} in 1953 and to Krasnoselskij \cite{Kra55} in 1955, respectively, who considered instead of Picard iteration (which does not converge, in general, for nonexpansive mappings) an explicit averaged iteration of the form $x_{n+1}=f(x_n,T x_n), n\geq 0$. We state in the following the classical  result due to Krasnoselskij \cite{Kra55}.
\begin{theorem} \label{tK}
Let $X$ be a uniformly convex Banach space and $C$ a closed subset of $X$. If $T:C \rightarrow  C$ is nonexpansive  
and $\overline{T(C)}$ is compact, then the mapping defined by
$
T_{\frac{1}{2}} x = \frac{1}{2}x + \frac{1}{2} Tx
$
has the property that its sequence of iterates always converges to a
point of $T$.
Since $T$ and $T_{\frac{1}{2}}$ have the same fixed points, the limit of a convergent sequence given by
$
x_{n+1}= \frac{1}{2} x_n+\frac{1}{2}T x_n, n\geq 0
$
 is necessarily a fixed point of $T$.
\end{theorem}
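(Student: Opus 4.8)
The plan is to set up the Krasnoselskij iteration $x_{n+1}=T_{\frac12}x_n=\frac12 x_n+\frac12 Tx_n$ (assuming, as the iteration itself requires and as is standard, that $C$ is also convex so that $T_{\frac12}$ is a self-map of $C$) and to obtain convergence from three ingredients: existence of a fixed point, asymptotic regularity of the averaged map, and a compactness argument. First I would secure a fixed point of $T$: since $T$ is nonexpansive it is continuous, and since $\overline{T(C)}$ is compact, Schauder's fixed point theorem yields a point $p\in C$ with $Tp=p$; because $T$ and $T_{\frac12}$ share their fixed points, $p$ is also fixed by $T_{\frac12}$.

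Next I would prove that $\{x_n\}$ is Fej\'er-monotone with respect to $p$. Since $T_{\frac12}$ is an average of the nonexpansive map $T$ and the identity it is itself nonexpansive, hence $\|x_{n+1}-p\|=\|T_{\frac12}x_n-T_{\frac12}p\|\le\|x_n-p\|$, so the scalar sequence $\|x_n-p\|$ is non-increasing and converges to some $r\ge 0$. If $r=0$ the conclusion is immediate, so I would assume $r>0$.

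The crux of the argument, and the step where uniform convexity is indispensable, is asymptotic regularity: $\|x_{n+1}-x_n\|\to 0$, equivalently $\|Tx_n-x_n\|\to 0$. Here I would invoke the standard uniform-convexity lemma: in a uniformly convex space, if $\limsup\|u_n\|\le r$, $\limsup\|v_n\|\le r$ and $\|\tfrac12 u_n+\tfrac12 v_n\|\to r$, then $\|u_n-v_n\|\to 0$. Applying it with $u_n=x_n-p$ and $v_n=Tx_n-p$ — for which $\|v_n\|=\|Tx_n-Tp\|\le\|x_n-p\|\to r$, while $\tfrac12 u_n+\tfrac12 v_n=x_{n+1}-p$ has norm tending to $r$ — forces $\|u_n-v_n\|=\|x_n-Tx_n\|\to 0$. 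This is the main obstacle, since without uniform convexity the averaged iteration need not be asymptotically regular.

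Finally I would combine asymptotic regularity with compactness to upgrade to full convergence. Because $Tx_n\in\overline{T(C)}$, which is compact, some subsequence satisfies $Tx_{n_k}\to y$; then $\|x_{n_k}-Tx_{n_k}\|\to 0$ gives $x_{n_k}\to y$, and continuity of $T$ yields $Ty=y$, so $y\in Fix(T)$. Reapplying Step 2 now with $p=y$, the sequence $\|x_n-y\|$ is non-increasing and has a subsequence tending to $0$, hence $\|x_n-y\|\to 0$. Thus $x_n\to y\in Fix(T)=Fix(T_{\frac12})$, which completes the proof.
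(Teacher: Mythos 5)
Your proof is correct and complete. A point of context: the paper itself supplies no proof of this statement --- it is quoted in the introduction as Krasnoselskij's classical theorem, with a citation to \cite{Kra55} --- so there is no internal argument to compare against; the natural comparison is with the paper's proof of its own main strong convergence result, Theorem~\ref{th1}, which follows the same architecture (fixed point existence, asymptotic regularity of the averaged map, compactness to extract a strongly convergent subsequence, Fej\'er monotonicity to promote subsequential convergence to convergence of the whole sequence). The differences lie in where the key ingredients come from. For existence you use Schauder's theorem, exploiting compactness of $\overline{T(C)}$; this needs no uniform convexity at all, whereas the paper's Theorem~\ref{th1} (which has no compactness of the range, only demicompactness) must invoke the Browder--Goede--Kirk theorem, where the geometry of the space is essential. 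For asymptotic regularity you argue directly from the standard uniform convexity lemma ($\limsup\|u_n\|\le r$, $\limsup\|v_n\|\le r$, $\|\frac12(u_n+v_n)\|\to r$ imply $\|u_n-v_n\|\to 0$), which isolates exactly the step where uniform convexity is indispensable and is essentially the historical route of Krasnoselskij, Schaefer and Edelstein; the paper instead outsources this step, in its Hilbert-space setting, to the Browder--Petryshyn result recorded as Lemma~\ref{lem1}. You were also right to flag that the statement as reproduced omits convexity of $C$: without it $T_{\frac12}$ need not map $C$ into itself and the iteration is not even well defined, so adding convexity (as in Krasnoselskij's original formulation) is not a cosmetic assumption but a necessary one, and your proof uses it precisely where it must --- for the self-map property and for Schauder's theorem.
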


The following two problems were important in the study of nonexpansive mappings and were approached, amongst others, by \cite{Sch},  \cite{Bro65}-\cite{Bro65c}, \cite{Ede66}, \cite{Hal}, \cite{Dot} etc. :

a) what additional conditions on the structure of the ambient space $X$ and / or on the properties of $T$ must be added to assure that a nonexpansive mapping has at least one fixed point ?

b) how one can locate and approximate such a fixed point ?

Apart from the theoretical aspects mentioned above, nonexpansive mappings are extremely important from the point of view of their applications because, see \cite{Bru}:
  
"1) Nonexpansive maps are intimately connected with the monotonicity methods developed since the early 1960Õs and constitute one of the first classes of nonlinear mappings for which fixed point theorems were obtained by using the fine geometric properties of the underlying Banach spaces instead of compactness properties.

2) Nonexpansive mappings appear in applications as the transition operators for initial value problems of differential inclusions of the form
$0 \in du +T(t)u$, where the operators $\{T(t)\}$ are, in general, set-valued and are accretive or dissipative and minimally continuous."

A simple search in MathScinet shows the impressive number of 3606 indexed papers that bear the term "nonexpansive" in their title and 5826 papers that hold the term "nonexpansive" anywhere, as by  November 30, 2018.

Having in view the significant interest of researchers for the study of nonexpansive mappings, the main aim of the present paper is to introduce a  larger class of mappings of nonexpansive type, which will be called {\it enriched nonexpansive mappings}, and to answer in the affirmative the two problems a) and b) mentioned above in that context.

It is our personal belief that the concept of {\it enriched nonexpansive mapping} introduced here for the first time as well as the technique of enrichment of contractive mappings itself, by means of which we obtained the results presented in this paper and in \cite{BerP19}-\cite{Pac19b}, will open new and productive research avenues in the field of nonlinear analysis.

\section{Enriched nonexpansive mappings in Hilbert spaces} 

\begin{defn}\label{def0}
Let $(X,\|\cdot\|)$ be a linear normed space. A mapping $T:X\rightarrow X$ is said to be an {\it enriched nonexpansive mapping} if there exists $b\in[0,ü\infty)$  such that
\begin{equation} \label{eq3}
\|b(x-y)+Tx-Ty\|\leq (b+1) \|x-y\|,\forall x,y \in X.
\end{equation}
To indicate the constant involved in \eqref{eq3} we shall also call $T$ as a  $b$-{\it enriched nonexpansive mapping}. 
\end{defn}

\begin{remark}
It is easy to see that any nonexpansive mapping $T$ is a $0$-enriched mapping, i.e., it satisfies \eqref{eq3} with $b=0$.  

We note that, according to Theorem 12.1 in \cite{Goe}, in a Hilbert space any  enriched nonexpansive mapping which is also firmly nonexpansive is nonexpansive.

It is very important to note that, similar to the case of nonexpansive mappings, any enriched nonexpansive mapping is {\it continuous}.
\end{remark}

\begin{ex}[ ] \label{ex1}
\indent

Let $X=\left[\dfrac{1}{2},2\right]$ be endowed with the usual norm and $T:X\rightarrow X$ be defined by $Tx=\dfrac{1}{x}$, for all $x\in \left[\dfrac{1}{2},2\right]$. Then 

(i) $T$ is Lipschitz continuous with Lipschitz constant $L=4$ and $T$ is not nonexpansive;

(ii) $T$ is  a $3/2$-enriched nonexpansive mapping.

(iii)  $Fix\,(T)=\{1\}$ but $T$ is not quasi-nonexpansive.

\begin{proof}
 (i) Assume $T$ is nonexpansive. Then 
$$
|Tx-Ty|\leq |x-y|,\forall x,y \in X,
$$
which, for $x=1$ and $y=1/2$, leads to the contradiction $1\leq 1/2$. 

(ii) The enriched nonexpansive condition \eqref{eq3} reduces in this case to 
$$
\left|b(x-y)+\frac{1}{x}-\frac{1}{y}\right|\leq (b+1) |x-y| \Leftrightarrow \left|b-\frac{1}{xy}\right|\cdot |x-y|\leq (b+1) \cdot |x-y|.
$$
It easy to check that, for any $b\geq 3/2$, we have
$$
\left|b-\frac{1}{xy}\right|\leq b+1, \forall x,y \in \left[\dfrac{1}{2},2\right],
$$
which proves that $T$ is  a $3/2$-enriched nonexpansive mapping. 

(iii) Assume $T$ is quasinonexpansive. Then we must have
$$
\left|\frac{1}{x}-1\right |\leq |x-1|, \forall x \in \left[\dfrac{1}{2},2\right].
$$
Just take $x=\dfrac{1}{2}$ to reach the contradiction $1\leq \dfrac{1}{2}$.
\end{proof}
\end{ex}


We need some definitions and results for proving our main results.

\begin{defn} \cite{Pet} \label{def1}
Let $H$ be a Hilbert space and $C$ a
subset of $H$. A mapping $T:C\rightarrow H$ is called \textit{demicompact} 
if it has the property that whenever $\{u_{n}\}$ is a bounded sequence in $H$
and $\{Tu_{n}-u_{n}\}$ is strongly convergent, then there exists a
subsequence $\{u_{n_{k}}\}$ of $\{u_{n}\}$ which is strongly convergent.
\end{defn}

\begin{defn} \cite{BroP67} \label{def2}
Let $H$ be a Hilbert space and $C$ a closed convex
subset of $H$. A mapping $T:C\rightarrow C$ is called \textit{asymptotically regular} (on $C$)
if, for each $x\in C$, 
$$
\|T^{n+1}x-T^n x\|\rightarrow 0  \textnormal{ as } n\rightarrow \infty.
$$ 
\end{defn}

The following Lemma, which is adapted after Corollary to Theorem 5 in \cite{BroP67} will be used in the proof of the main result of this section.
\begin{lem} \label{lem1}
Let $H$ be a Hilbert space and $C$ a closed convex
subset of $H$. If the mapping $U:C\rightarrow C$ is nonexpansive and $Fix\,(U)\neq \emptyset$ then, for any given $\lambda\in (0,1)$, the mapping $U_\lambda=I+(1-\lambda) U$ maps $C$ into $C$, has the same fixed points as $U$ and is asymptotically regular.
\end{lem}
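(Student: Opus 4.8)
The plan is to treat $U_\lambda$ as the Krasnoselskij average $U_\lambda = \lambda I + (1-\lambda) U$; this convex combination is the only reading of the displayed formula under which $U_\lambda$ self-maps $C$ and shares its fixed points with $U$. The first two assertions are then immediate. Since $U_\lambda x = \lambda x + (1-\lambda) Ux$ is a convex combination of the two points $x \in C$ and $Ux \in C$, convexity of $C$ gives $U_\lambda x \in C$, so $U_\lambda : C \to C$. For the fixed points, $U_\lambda x = x$ is equivalent to $(1-\lambda)(Ux - x) = 0$, and since $\lambda \in (0,1)$ this holds iff $Ux = x$; hence $Fix\,(U_\lambda) = Fix\,(U)$. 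I would also record at this stage that $U_\lambda$ is itself nonexpansive, being a convex combination of the nonexpansive maps $I$ and $U$.

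The substance of the lemma is asymptotic regularity, and here I would exploit the Hilbert space geometry. Fix a point $p \in Fix\,(U) = Fix\,(U_\lambda)$, whose existence is guaranteed by hypothesis, and set $x_n = U_\lambda^n x$ for an arbitrary $x \in C$. Nonexpansiveness of $U_\lambda$ gives $\|x_{n+1} - p\| \le \|x_n - p\|$, so $\{\|x_n - p\|\}$ is nonincreasing and bounded below, hence convergent. The key tool is the identity valid in any Hilbert space,
$$
\|\lambda a + (1-\lambda) b\|^2 = \lambda \|a\|^2 + (1-\lambda)\|b\|^2 - \lambda(1-\lambda)\|a - b\|^2,
$$
which I would apply with $a = x_n - p$ and $b = Ux_n - p$, so that $\lambda a + (1-\lambda) b = x_{n+1} - p$ and $a - b = x_n - Ux_n$.

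Because $U$ is nonexpansive, $\|Ux_n - p\| = \|Ux_n - Up\| \le \|x_n - p\|$, and substituting into the identity yields the decisive estimate
$$
\|x_{n+1} - p\|^2 \le \|x_n - p\|^2 - \lambda(1-\lambda)\|x_n - Ux_n\|^2.
$$
Summing this telescoping inequality from $0$ to $N$ gives $\lambda(1-\lambda)\sum_{n=0}^{N}\|x_n - Ux_n\|^2 \le \|x_0 - p\|^2$, so the series converges and $\|x_n - Ux_n\| \to 0$. Finally, from $x_{n+1} - x_n = (1-\lambda)(Ux_n - x_n)$ I obtain $\|U_\lambda^{n+1}x - U_\lambda^n x\| = (1-\lambda)\|Ux_n - x_n\| \to 0$, which is exactly asymptotic regularity.

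I expect the main obstacle to be this last step, since the first two claims are purely algebraic. The delicate point is that mere nonexpansiveness (not contractivity) of $U$ already forces $\|x_n - Ux_n\| \to 0$; this is precisely where the Hilbert space identity and the strict positivity $\lambda(1-\lambda) > 0$ (which needs $\lambda \in (0,1)$, not just $\lambda \in [0,1]$) are indispensable, together with the existence of the fixed point $p$, which serves as the fixed anchor making $\{\|x_n - p\|\}$ a convergent monotone sequence.
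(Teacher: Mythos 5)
Your proof is correct. Note, however, that the paper contains no proof of this lemma to compare against: the lemma is stated with only the remark that it is adapted from the Corollary to Theorem 5 of Browder and Petryshyn \cite{BroP67}, and the proof is delegated entirely to that source. What you have written is, in substance, the classical Browder--Petryshyn argument itself: anchor at a point $p\in Fix\,(U)$, apply the Hilbert-space identity
$\|\lambda a+(1-\lambda)b\|^{2}=\lambda\|a\|^{2}+(1-\lambda)\|b\|^{2}-\lambda(1-\lambda)\|a-b\|^{2}$
with $a=x_{n}-p$ and $b=Ux_{n}-p$ to obtain
$\|x_{n+1}-p\|^{2}\le\|x_{n}-p\|^{2}-\lambda(1-\lambda)\|x_{n}-Ux_{n}\|^{2}$,
and telescope to conclude $\|x_{n}-Ux_{n}\|\to 0$. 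So you have not found a genuinely different route; you have correctly reconstructed the proof that the paper omits, which is valuable in itself. Two details in your write-up deserve explicit approval. First, your reading $U_{\lambda}=\lambda I+(1-\lambda)U$ is the right repair of the typo in the statement: as printed, $U_{\lambda}=I+(1-\lambda)U$ would neither map $C$ into $C$ nor have the same fixed points as $U$. Second, you correctly observe that the argument needs only $Fix\,(U)\neq\emptyset$ and never the boundedness of $C$; this is precisely the trade that distinguishes the Corollary to Theorem 5 in \cite{BroP67} (fixed point assumed) from Theorem 5 itself (boundedness of $C$ assumed, which yields a fixed point via the Browder--G\"ohde--Kirk theorem), and it is exactly what the lemma's hypotheses supply.
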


Now we can state and prove the main result of this section. 
\begin{theorem}  \label{th1}
Let $C$ be a bounded closed convex
subset of a Hilbert space $H$ and  $T:C\rightarrow C$ be a $b$-enriched nonexpansive and demicompact mapping. Then the set $Fix\,(T)$ of fixed points of $T$ is a nonempty convex set and there exists $\lambda\in \left(0,1\right)$ such that, for any given $x_0\in C$,  the Krasnoselskij iteration $\{x_n\}_{n=0}^{\infty}$ given by
\begin{equation} \label{eq4}
x_{n+1}=(1-\lambda) x_n+\lambda Tx_n,\,n\geq 0,
\end{equation}
converges strongly to a fixed point of $T$.
\end{theorem}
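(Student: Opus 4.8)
The plan is to exploit the enrichment in the most natural way, namely to \emph{re-scale} the averaging parameter so that the associated averaged operator becomes genuinely nonexpansive, and then to run the classical Browder--Petryshyn machinery recorded in Lemma~\ref{lem1}. The key observation is to set $\lambda^\ast=\tfrac{1}{b+1}$ and consider the averaged map
\[
V := \Big(1-\tfrac{1}{b+1}\Big) I + \tfrac{1}{b+1}\, T = \tfrac{b}{b+1}\,I + \tfrac{1}{b+1}\,T .
\]
For $x,y\in C$ one computes $Vx-Vy=\tfrac{1}{b+1}\big[b(x-y)+(Tx-Ty)\big]$, so the $b$-enriched condition \eqref{eq3} gives at once $\|Vx-Vy\|\le\|x-y\|$; that is, $V$ is nonexpansive and, by convexity of $C$, maps $C$ into $C$. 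Since $\lambda^\ast>0$, the equivalence $Vx=x\Leftrightarrow Tx=x$ shows $Fix\,(V)=Fix\,(T)$. This single step is the heart of the matter: enrichment converts a merely Lipschitz (possibly non-nonexpansive) $T$ into a bona fide nonexpansive $V$ with the same fixed points.

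Next I would settle the two structural claims about $Fix\,(T)$. Because $C$ is a nonempty bounded closed convex subset of the Hilbert space $H$ and $V$ is nonexpansive, the Browder--G\"ohde--Kirk fixed point theorem yields $Fix\,(V)\neq\emptyset$, hence $Fix\,(T)=Fix\,(V)\neq\emptyset$. Convexity of $Fix\,(T)$ I would deduce from strict convexity of $H$: for $p,q\in Fix\,(V)$ and $z=tp+(1-t)q$, nonexpansiveness forces $\|Vz-p\|\le(1-t)\|p-q\|$ and $\|Vz-q\|\le t\|p-q\|$, and comparing their sum with the triangle inequality $\|p-q\|\le\|p-Vz\|+\|Vz-q\|$ makes every inequality an equality; strict convexity then pins $Vz$ to the segment $[p,q]$ at the point $z$, so $Vz=z$ and $Fix\,(T)$ is convex.

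To produce the convergent iteration, I would fix any $\lambda\in\big(0,\tfrac{1}{b+1}\big)$ and set $\alpha=1-\lambda(b+1)\in(0,1)$; then the Krasnoselskij map $W:=(1-\lambda)I+\lambda T$ is precisely the average $\alpha I+(1-\alpha)V$. Since $V$ is nonexpansive with $Fix\,(V)\neq\emptyset$, Lemma~\ref{lem1} applies and shows that $W$ sends $C$ into $C$, has $Fix\,(W)=Fix\,(V)=Fix\,(T)$, and is asymptotically regular. As the iteration \eqref{eq4} is exactly $x_{n+1}=Wx_n=W^{n+1}x_0$, asymptotic regularity gives $\|x_{n+1}-x_n\|\to0$, whence $\|Tx_n-x_n\|=\tfrac1\lambda\|x_{n+1}-x_n\|\to0$. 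Demicompactness of $T$ now enters: the bounded sequence $\{x_n\}\subset C$, together with the strongly convergent $\{Tx_n-x_n\}$, yields a subsequence $x_{n_k}\to p\in C$, and continuity of $T$ combined with $Tx_{n_k}-x_{n_k}\to0$ forces $Tp=p$, so $p\in Fix\,(T)$. Finally, to pass from the subsequence to the whole sequence I would use Fej\'er monotonicity: since $Wp=p$ and $W$ is nonexpansive, $\|x_{n+1}-p\|\le\|x_n-p\|$, so $\{\|x_n-p\|\}$ is nonincreasing; as a subsequence tends to $0$, the whole sequence does, i.e. $x_n\to p$ strongly.

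I expect the genuine obstacle to be the \emph{existence} of a fixed point rather than the convergence itself: no averaged iteration can converge without one, and since Lemma~\ref{lem1} presupposes a nonempty fixed point set, the argument must secure $Fix\,(T)\neq\emptyset$ independently, via Browder--G\"ohde--Kirk applied to $V$, before the asymptotic-regularity and demicompactness steps can be deployed. Everything after that is the standard Browder--Petryshyn template, which becomes available only once the re-scaling $\lambda^\ast=\tfrac{1}{b+1}$ has turned $T$ into the nonexpansive $V$.
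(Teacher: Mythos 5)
Your proposal is correct and follows essentially the same route as the paper: your $V$ is exactly the paper's averaged operator $T_\mu$ with $\mu=\tfrac{1}{b+1}$, and both arguments proceed via Browder--G\"ohde--Kirk for existence, a triangle-inequality-equality argument for convexity of the fixed point set, Lemma~\ref{lem1} for asymptotic regularity of the doubly averaged map, demicompactness plus continuity to extract a strongly convergent subsequence whose limit is fixed, and Fej\'er monotonicity to upgrade to convergence of the whole sequence. The only cosmetic difference is that you fix $\lambda\in\bigl(0,\tfrac{1}{b+1}\bigr)$ from the start and decompose $(1-\lambda)I+\lambda T$ as $\alpha I+(1-\alpha)V$, whereas the paper composes two averagings and renames $\lambda\mu$ as $\lambda$ at the end.
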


\begin{proof}
Since $T$ is enriched nonexpansive, by Definition \ref{def0} it follows that there exists a constant $b$, $b\in[0,ü\infty)$, such that
$$
\|b(x-y)+Tx-Ty\|\leq (b+1) \|x-y\|,\forall x,y \in C.
$$
By putting $b=\dfrac{1}{\mu}-1$, it follows that $\mu\in (0,1]$ and the previous inequality is equivalent to
\begin{equation} \label{eq5}
\|(1-\mu)(x-y)+\mu Tx-\mu Ty\|\leq  \|x-y\|,\forall x,y \in C.
\end{equation}
Denote $T_\mu x=(1-\mu) x+\mu T x$. Then inequality \eqref{eq5} expresses the fact that 
$$
\|T_\mu x-T_\mu y\|\leq  \|x-y\|,\forall x,y \in C,
$$
i.e., that the averaged operator $T_\mu$ is nonexpansive. 

By means of Browder-Goede-Kirk fixed point theorem (e.g., Theorem 4 in \cite{BroP67}), reproduced as Theorem 3.1 in \cite{Ber07}, it follows that $T_\mu$ has at least one fixed point. 

Note also that, in view of Lemma \ref{lem1}, $Fix\,(T)=Fix\,(T_\mu)\neq \emptyset$.

Since $H$ is a Hilbert space, $Fix\,(T)$ is also convex, see for example Theorem 6 in \cite{BroP67}. 
 We include here a proof of the fact that $Fix\,(T_\mu)$ is convex, i.e., when $x,y\in Fix\,(T_\mu)$ and $\lambda\in[0,1]$, we have
$$
u_\lambda=(1-\lambda)x+\lambda y\in Fix\,(T_\mu).
$$
Indeed, since $T_\mu$ is nonexpansive, we have
\begin{equation} \label{eq6}
\|T_\mu u_\lambda- x\|=\|T_\mu u_\lambda-T_\mu x\|\leq \|u_\lambda- x\|
\end{equation}
and, similarly,
\begin{equation} \label{eq7a}
\|T_\mu u_\lambda- y\|\leq \|u_\lambda- y\|.
\end{equation}
Now, by \eqref{eq6} and \eqref{eq7a}, we have
$$
\|x-y\|\leq \|x-T_\mu u_\lambda\|+\|T_\mu u_\lambda-y\|\leq \|u_\lambda- x\|+\|u_\lambda- y\|=\|\lambda (x-y)\|
$$
$$
+\|(1-\lambda)(x- y)\|=\|x-y\| \Rightarrow \|x-T_\mu u_\lambda\|+\|T_\mu u_\lambda-y\|=\|x-y\|.
$$
The last equality implies the existence of some nonnegative numbers $a,b$ with $a,b\leq 1$, such that
$$
x-T_\mu u_\lambda=a(x-u_\lambda)
$$
and 
$$
y-T_\mu u_\lambda=b(y-u_\lambda).
$$
Hence, for all $\lambda\in[0,1]$,
\begin{equation} \label{eq8u}
\|x-y\|=\|x-T_\mu u_\lambda\|+\|T_\mu u_\lambda-y\|=a\cdot \|x-u_\lambda\|+b\cdot \|y-u_\lambda\|.
\end{equation}
Now, just take $\lambda=1$ in \eqref{eq8u} to get $a=1$ and then take $\lambda=0$ in \eqref{eq8u} to get $b=1$. 

This shows that 
$T_\mu u_\lambda= u_\lambda$, that is, $u_\lambda\in Fix\,(T_\mu)$. So $Fix\,(T_\mu)$ is convex and hence $Fix\,(T)$, is convex, too.
Thus, the first part of our theorem is proven. 

In order to prove the last part of the theorem, consider the sequence $\{x_n\}_{n=0}^{\infty}$ given by 
$$
x_{n+1}=(1-\lambda) x_n+\lambda T_\mu x_n,\,n\geq 0.
$$
It is obvious that $\{x_n\}_{n=0}^{\infty}$ lies in $C$ and hence it is bounded.

Denote $U_\lambda=(1-\lambda) I+\lambda T_\mu$, where $I$ is the identity map. Then, since $T_\mu$ is nonexpansive, by Lemma \ref{lem1} it follows that $U_\lambda$ is asymptotically regular, i.e.,
$$
\|x_n-U_\lambda x_{n}\|\rightarrow 0,\, \textnormal{ as } n\rightarrow \infty.
$$

We also have
\begin{equation} \label{eq8a}
U_\lambda x-x=\lambda (T_\mu x-x)=\lambda \mu (Tx-x),
\end{equation}
and hence
$$
\|x_n-T_\mu x_{n}\|\rightarrow 0,\, \textnormal{ as } n\rightarrow \infty.
$$
Since, by hypothesis, $T$ is demicompact, it follows by \eqref{eq8a} that $T_\mu$
 is demicompact, too. Hence, there exists a subsequence $\{x_{n_k}\}$ of $\{x_n\}_{n=0}^{\infty}$ which converges strongly in $C$.
 Denote
 $$
 \lim_{k\rightarrow \infty} x_{n_k}=q.
 $$
 Then, by the continuity of $T_\mu$ it follows that
 $$
 T_\mu x_{n_k} \rightarrow T_\mu q,\, \textnormal{ as } k\rightarrow \infty.
 $$
 Therefore, $\{x_{n_k} -T_\mu x_{n_k} \}$ converges strongly to $0$ and simultaneously, $\{x_{n_k} -T_\mu x_{n_k} \}$ converges strongly to $q-T_\mu q$, which proves that $q=T_\mu q$, i.e., $$q\in Fix\,(T_\mu)=Fix\,(T).$$

The convergence  of the entire sequence $\{x_n\}_{n=0}^{\infty}$ to $q$ now follows from the inequality
$$
\|x_{n+1}-q\|\leq \|x_{n}-q\|, \,n\geq 0,
$$
which is a direct consequence of the nonexpansivity of $U_\lambda$ (which, in turn, is a consequence of the nonexpansivity of $T_\mu$). 

Hence, for any $x_0\in C$, the Krasnoselskij iteration $\{x_n\}_{n=0}^{\infty}$, given by
$$
x_{n+1}=U_\lambda x_n=(1-\lambda) x_n+\lambda T_\mu x_n=(1-\lambda) x_n+\lambda [(1-\mu)x_n+\mu T x_n]
$$
$$
=(1-\lambda) x_n+\lambda [(1-\mu)x_n+\mu T x_n]
$$
$$
=(1-\lambda \mu) x_n+\lambda \mu Tx_n
$$
converges strongly to $q\in Fix\,(T)$ as $n\rightarrow \infty$.

To get exactly the formula \eqref{eq4} for the Krasnoselskij iteration $\{x_n\}_{n=0}^{\infty}$ given above, just simply denote $\lambda:=\lambda \mu\in (0,1)$.
\end{proof}

\begin{remark}\label{rem1}
Theorem \ref{th1} is an extension of Lemma 3 of Petryshyn \cite{Pet} and of its global variant (Theorem 6) in Browder and Petryshyn \cite{BroP67}, by considering instead of nonexpansive mappings the larger class of enriched nonexpansive mappings.
\end{remark}

\begin{remark}\label{rem2}
The class of demicompact operators contains, among many classes of operators (see \cite{Pet}), the compact operators and, in particular, 
the completely continuous and strongly continuous operators.

Hence, from Theorem  \ref{th1} (and also from Corollary \ref{cor1}) one obtains the pioneering result of Krasnoselskij from 1955 (\cite{Kra55}, \cite{Kra56})
$$
\frac{1}{2}(x_n+Tx_n)\rightarrow q\in Fix\,(T) \,(\textnormal{ as } n\rightarrow \infty
$$
which was subsequently extended by Schaefer in 1957 (\cite{Sch}) to the general Krasnoselskij scheme 
$$
(1-\lambda)x_n+\lambda Tx_n\rightarrow q\in Fix\,(T), \,(\textnormal{ as } n\rightarrow \infty,\, 0<\lambda <1),
$$
a result established in the general setting of a uniformly Banach space.

The above results for compact operators have been extended to strictly convex Banach spaces by Edelstein in \cite{Ede66}.

\end{remark}

\begin{corollary} (Theorem 6, \cite{BroP67}) \label{cor1}
Let $C$ be a bounded closed convex
subset of a Hilbert space $H$ and  $T:C\rightarrow C$ be a  nonexpansive and demicompact operator. Then the set $Fix\,(T)$ of fixed points of $T$ is a nonempty convex set and there exists $\lambda\in \left(0,1\right)$ such that, for any given $x_0\in C$, the Krasnoselskij iteration $\{x_n\}_{n=0}^{\infty}$ given by
$$
x_{n+1}=(1-\lambda) x_n+\lambda Tx_n,\,n\geq 0,
$$
converges strongly to a fixed point of $T$.
\end{corollary}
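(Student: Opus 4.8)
The plan is to obtain this statement as the special case $b=0$ of Theorem \ref{th1}, since that theorem has already been proved for arbitrary $b\in[0,\infty)$. The single observation that makes this work is the one recorded in the Remark after Definition \ref{def0}: every nonexpansive mapping is a $0$-enriched nonexpansive mapping. Indeed, substituting $b=0$ into the defining inequality \eqref{eq3} gives back precisely $\|Tx-Ty\|\leq\|x-y\|$, which is the nonexpansivity assumed here. Thus the hypothesis ``$T$ is nonexpansive'' is literally the hypothesis ``$T$ is $0$-enriched nonexpansive.''

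First I would verify that all the remaining hypotheses of Theorem \ref{th1} are in force: $C$ is a bounded closed convex subset of the Hilbert space $H$, $T$ maps $C$ into $C$, and $T$ is demicompact. All of these are assumed in the corollary. Consequently Theorem \ref{th1} applies with $b=0$ and delivers, word for word, the two assertions we want --- that $Fix\,(T)$ is nonempty and convex, and that there is $\lambda\in(0,1)$ for which the Krasnoselskij iteration \eqref{eq4} converges strongly to a fixed point of $T$.

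Next I would check that the conclusion of Theorem \ref{th1} really collapses to the desired one when $b=0$. In the notation of that proof one sets $\mu=\frac{1}{b+1}$, so $b=0$ forces $\mu=1$; the averaged operator then degenerates to $T_\mu=(1-\mu)I+\mu T=T$, the averaging step is vacuous, and the iterated map is $U_\lambda=(1-\lambda)I+\lambda T$. The final relabeling $\lambda:=\lambda\mu$ carried out in the proof of Theorem \ref{th1} reduces to $\lambda:=\lambda$, so the resulting iteration is exactly \eqref{eq4} with $\lambda\in(0,1)$, as required.

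I do not expect any genuine obstacle here, since the corollary is a strict specialization of an already-established result; the only point worth a moment's care is confirming that the relabeling keeps $\lambda$ in the open interval $(0,1)$, which is immediate because $\mu=1$. As an alternative one could simply invoke Theorem 6 of \cite{BroP67}, which is the classical statement being recovered; but the intended route, and the one that makes Remark \ref{rem1} meaningful, is to read off the corollary directly from Theorem \ref{th1}.
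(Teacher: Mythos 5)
Your proposal is correct and is exactly the paper's own argument: the paper proves Corollary \ref{cor1} by noting that a nonexpansive mapping is a $0$-enriched nonexpansive mapping and invoking Theorem \ref{th1} with $b=0$, i.e., $\mu=1$. Your additional checks (that $T_\mu$ degenerates to $T$ and the relabeling $\lambda:=\lambda\mu$ keeps $\lambda\in(0,1)$) are sound and merely make explicit what the paper leaves implicit.
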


\begin{proof}
Any nonexpansive mapping is a $0$-enriched nonexpansive mapping. Hence, Corollary \ref{cor1} follows from Theorem \ref{th1} for $b=0$ , that is, for $\mu=1$.
\end{proof}

Remind that a mapping $T:C\rightarrow C$  is called {\it generalized pseudocontractive} with constant $r$, see \cite{Ver}, \cite{Ber02} and \cite{Ber07}, if 
\begin{equation}\label{vv}
\|Tx-Ty\|^2\leq r \|x-y\|^2+\|Tx-Ty-r(x-y)\|^2,\,x,y\in C.
\end{equation}
It easy to prove that, in a Hilbert space, condition \eqref{vv} is equivalent to
\begin{equation}\label{vu}
\langle Tx-Ty, x-y\rangle\leq r \|x-y\|^2,\,x,y\in C.
\end{equation}

Note also that the function $T$ in Example \ref{ex1}  is generalized pseudocontractive with constant $r>0$ arbitrary and Lipschitzian with constant $L=4$. 

Theorem \ref{th1} is also an extension of the main result in \cite{Ver}, see also \cite{Ber02} and \cite{Ber07}.

\begin{corollary} (\cite{Ver}) \label{cor2}
Let $C$ be a nonempty closed convex
subset of a Hilbert space $H$ and  $T:C\rightarrow C$ be a  Lipschitzian and generalized pseudocontractive operator (with constants $s$ and $r$, respectively, $r<1$). Then  for any given $x_0\in C$ and any fixed number $\lambda$, $0<\lambda<\frac{2(1-r)}{1-2 r+s^2}$, the Krasnoselskij iteration $\{x_n\}_{n=0}^{\infty}$ given by
\begin{equation}\label{w}
x_{n+1}=(1-\lambda) x_n+\lambda Tx_n,\,n\geq 0,
\end{equation}
converges strongly to the unique fixed point of $T$.
\end{corollary}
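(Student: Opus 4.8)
Although Corollary \ref{cor2} is cited here alongside Theorem \ref{th1}, its hypotheses are genuinely different---$C$ is not assumed bounded and $T$ is not assumed demicompact---so rather than invoking Theorem \ref{th1} I would reduce the statement directly to the Banach contraction mapping principle, by proving that for every admissible $\lambda$ the averaged operator $T_\lambda=(1-\lambda)I+\lambda T$ is a strict contraction on $C$. First I would note that, by convexity of $C$, $T_\lambda$ is a well-defined self-map of $C$, then fix $x,y\in C$, set $u=x-y$ and $v=Tx-Ty$, and expand in the Hilbert inner product:
$$
\|T_\lambda x-T_\lambda y\|^2=(1-\lambda)^2\|u\|^2+2\lambda(1-\lambda)\langle u,v\rangle+\lambda^2\|v\|^2 .
$$
Working in the natural regime $\lambda\in(0,1)$, the cross-term coefficient $2\lambda(1-\lambda)$ is nonnegative, so I may insert the generalized pseudocontractivity in its equivalent inner-product form \eqref{vu}, namely $\langle u,v\rangle\leq r\|u\|^2$, together with the Lipschitz bound $\|v\|\leq s\|u\|$, to obtain
$$
\|T_\lambda x-T_\lambda y\|^2\leq f(\lambda)\,\|u\|^2,\qquad f(\lambda)=1-2\lambda(1-r)+\lambda^2(1-2r+s^2).
$$

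The second step is to pin down exactly when $f(\lambda)<1$. Cancelling the common factor $\lambda>0$, the inequality $f(\lambda)<1$ is equivalent to $\lambda(1-2r+s^2)<2(1-r)$; dividing by the coefficient $1-2r+s^2$ then produces precisely the interval $0<\lambda<\frac{2(1-r)}{1-2r+s^2}$ asserted in the statement. The one point requiring verification is the positivity of that coefficient: because Cauchy--Schwarz gives $\langle u,v\rangle\le s\|u\|^2$, we may assume $r\le s$, whence $1-2r+s^2\ge(1-s)^2\ge 0$, the inequality being strict since $r<1$ rules out $r=s=1$. This simultaneously guarantees that the admissible interval for $\lambda$ is nonempty.

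With $T_\lambda$ shown to be a contraction with constant $\sqrt{f(\lambda)}<1$, the conclusion follows at once. As a nonempty closed subset of the complete space $H$, $C$ is itself complete, so the Banach contraction principle yields a unique fixed point $q\in C$ of $T_\lambda$ and the strong convergence of its Picard iterates to $q$. Observing that the Krasnoselskij iteration \eqref{w} is nothing but $x_{n+1}=T_\lambda x_n$, and that $Fix\,(T)=Fix\,(T_\lambda)$ (since $T_\lambda x=x$ is equivalent to $Tx=x$ for $\lambda>0$), we conclude that $\{x_n\}_{n=0}^{\infty}$ converges strongly to the unique fixed point $q$ of $T$. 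I expect the only genuinely delicate points to be the sign bookkeeping in the inner-product estimate (which ties the argument to the regime where $T_\lambda$ is both a self-map of $C$ and where the pseudocontractive bound may be applied as an upper bound) and the positivity of $1-2r+s^2$; once these are settled, the remainder is a routine invocation of the contraction principle.
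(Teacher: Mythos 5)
Your proof is correct in substance, but it follows a genuinely different route from the paper's. The paper does not use the Banach contraction principle at all: it rewrites the $b$-enriched nonexpansive condition \eqref{eq3} in the equivalent Hilbert-space form \eqref{uv}, observes that the Lipschitz and generalized pseudocontractivity hypotheses give $2b\langle Tx-Ty,x-y\rangle+\|Tx-Ty\|^2\le(2rb+s^2)\|x-y\|^2$, and concludes that $T$ is $b$-enriched nonexpansive for any $b\ge\frac{s^2-1}{2(1-r)}$; it then asserts demicompactness of $T$ (via Lemma 2 of Petryshyn \cite{Pet}) and applies Theorem \ref{th1}, with uniqueness proved separately from \eqref{vu}. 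That derivation makes the conceptual point of the corollary --- Lipschitzian generalized pseudocontractions lie in the new enriched class --- but it delivers less than the statement literally claims: Theorem \ref{th1} assumes $C$ bounded (not assumed in the corollary) and produces only \emph{some} $\lambda\in(0,1)$, not convergence for every $\lambda$ in the explicit interval $\left(0,\tfrac{2(1-r)}{1-2r+s^2}\right)$. Your direct argument --- $T_\lambda$ is a strict contraction with factor $\sqrt{f(\lambda)}$, then Banach's principle --- is in fact the approach of the cited sources \cite{Ver}, \cite{Ber02}, \cite{Ber07}; it needs neither boundedness nor demicompactness, covers the stated range of $\lambda$ in the relevant regime, and yields uniqueness and a rate of convergence for free.

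Two caveats on your write-up. First, the restriction to $\lambda\in(0,1)$ is not innocuous: when $s<1$ the bound $\frac{2(1-r)}{1-2r+s^2}$ exceeds $1$, and for $\lambda>1$ both your sign argument for the cross term $2\lambda(1-\lambda)\langle u,v\rangle$ and the self-map property of $T_\lambda$ break down (this is a defect inherited from the statement in \cite{Ver}; in the only nontrivial case $s\ge 1$ one checks $2(1-r)\le 1-2r+s^2$, i.e. the stated bound is $\le 1$, so your regime covers everything). Second, the justification ``Cauchy--Schwarz gives $\langle u,v\rangle\le s\|u\|^2$, so we may assume $r\le s$'' does not establish positivity of $1-2r+s^2$ with the \emph{given} $r$: replacing $r$ by $\min(r,s)$ changes the interval appearing in the statement, and with the original constants one can genuinely have $1-2r+s^2<0$ (e.g. $r=0.9$, $s=0.5$, a pair realized by any $\tfrac12$-Lipschitz map). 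The clean repair is to note that if $1-2r+s^2\le 0$ the stated interval is empty or ill-defined, so there is nothing to prove, while if $s\ge 1$ then $1-2r+s^2\ge 2(1-r)>0$ automatically; in every case where the statement has content, your division step is legitimate.
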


\begin{proof}
Note that, in a Hilbert space, the $b$-enriched nonexpansive condition  \eqref{eq3} is equivalent to
\begin{equation}\label{uv}
2b\cdot \langle Tx-Ty, x-y\rangle+1\cdot \|Tx-Ty\|^2\leq (2b+1)\cdot \|x-y\|^2.
\end{equation}
Since $T$ is a Lipschitzian and generalized pseudocontractive operator we have
$$
2b\cdot \langle Tx-Ty, x-y\rangle+1\cdot \|Tx-Ty\|^2\leq (2rb+s^2) \|x-y\|^2
$$
and so, in order to have \eqref{uv} satisfied, it suffices to take $b\geq \dfrac{s^2-1}{2(1-r)}$, which is always possible. 

On the other hand, $T$ is demicompact, which is a consequence of the fact that $T$ is Lipschitzian, hence continuous, and of the generalized pseudocontractivity property, according to Lemma 2 in \cite{Pet}, where are presented some classeses of demicontractive operators.

Now the conclusion of the corollary follows by Theorem \ref{th1}. 

It is important to note that the uniqueness of the fixed point is in this case a direct consequence of the generalized pseudocontractivity property. Indeed, suppose that $T$ possesses two distinct fixed points, i.e., $p,q\in Fix\,(T)$ and $p\neq q$. Then, by \eqref{vu}, we get
$$
\langle x-y, x-y\rangle \leq  r \|x-y\|^2 \Leftrightarrow  \|x-y\|^2\leq r \|x-y\|^2,
$$
a contradiction, since $r<1$.
\end{proof}

We close this section by reminding that, under the assumptions of Corollary \ref{cor2}, if additionally, one has
$$
r\leq s,
$$
where $r$ and $s$ are the constant of generalised pseudocontractivity and Lipschitz constant, respectively, then it is possible to identify the fastest Krasnoselskij iterations amongst those given by \eqref{w}, see \cite{Ber02}, \cite{Ber02a} and \cite{Ber07}, which is obtained for
$$
\lambda=\frac{1-r}{1-2 r+s^2}.
$$
This result has been at the origin of introducing a notion of rate of convergence for comparing two fixed point iterations in \cite{Ber04a}, a concept that turn out to be very useful, see also \cite{Ber16} and references therein.

\section{Weak convergence theorems for enriched nonexpansive mappings in Hilbert spaces} 

A key tool in proving the main result in the previous section has been the fact that the averaged map of a nonexpansive mapping $T$ is asymptotically regular, a property which has been used in conjunction with the demicompactness of $T$. 

In the absence of the demicompactness property of the operator $T$, the asymptotic regularity alone does not imply in general the strong convergence of the Krasnoselskij sequence $\{x_n\}_{n=0}^{\infty}$  but the weak convergence can be still assured, as shown by the next theorems which extend Theorems 7 and 8 in Browder and Petryshyn \cite{BroP67} from nonexpansive mappings to enriched nonexpansive mappings.

\begin{theorem}  \label{th2}
Let $C$ be a bounded closed convex
subset of a Hilbert space $H$ and  $T:C\rightarrow C$ be an enriched nonexpansive operator with $Fix\,(T)=\{p\}$. Then, for any given $x_0\in C$ and any fixed number $\lambda$, $0<\lambda<1$, the Krasnoselskij iteration $\{x_n\}_{n=0}^{\infty}$ given by
\begin{equation} \label{eq4a}
x_{n+1}=(1-\lambda) x_n+\lambda Tx_n,\,n\geq 0,
\end{equation}
converges weakly to $p$.
\end{theorem}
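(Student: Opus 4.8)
The plan is to reduce the problem, exactly as in the proof of Theorem \ref{th1}, to a nonexpansive averaged operator, and then to replace the role played there by demicompactness with Browder's demiclosedness principle, the uniqueness of the fixed point doing the rest. First I would set $b=\frac{1}{\mu}-1$ with $\mu\in(0,1]$, so that by \eqref{eq5} the averaged map $T_\mu x=(1-\mu)x+\mu Tx$ is nonexpansive on $C$ and, by Lemma \ref{lem1}, $Fix\,(T_\mu)=Fix\,(T)=\{p\}$. A direct computation then shows that the Krasnoselskij iterates \eqref{eq4a} satisfy $x_{n+1}=(1-\gamma)x_n+\gamma T_\mu x_n$ with $\gamma=\lambda/\mu$, and since $(1-\gamma)x_n+\gamma T_\mu x_n=(1-\lambda)x_n+\lambda Tx_n$ this averaged iteration of $T_\mu$ is literally the iteration \eqref{eq4a}. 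Here one has to choose $\mu$ with $\lambda<\mu\le 1$ (taking the enrichment constant $b$ as small as \eqref{eq3} allows, so that $\mu$ is as large as possible) in order to keep $\gamma\in(0,1)$; this is the one place where the admissible range of $\lambda$ interacts with how far $T$ departs from nonexpansiveness, and it requires a little care.

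With $U_\gamma=(1-\gamma)I+\gamma T_\mu$ now a genuine averaged iteration of a nonexpansive map, I would invoke Lemma \ref{lem1} to obtain that $U_\gamma$ is asymptotically regular, whence $\|x_{n+1}-x_n\|\to 0$ and therefore, by the identity used in \eqref{eq8a}, $\|x_n-T_\mu x_n\|\to 0$ as $n\to\infty$. Because $C$ is bounded, closed and convex in the Hilbert space $H$, it is weakly compact; hence $\{x_n\}$ lies in a weakly compact set and every subsequence admits a further subsequence converging weakly to some point of $C$.

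The decisive step, and the one I expect to be the main obstacle, is to show that every such weak cluster point is a fixed point of $T_\mu$. This is exactly the assertion that $I-T_\mu$ is \emph{demiclosed} at $0$: whenever $x_{n_k}\rightharpoonup q$ and $(I-T_\mu)x_{n_k}\to 0$ strongly, then $(I-T_\mu)q=0$. For nonexpansive maps on a Hilbert space this is Browder's demiclosedness principle, which I would establish using Opial's property of $H$, namely that $\liminf_k\|x_{n_k}-q\|<\liminf_k\|x_{n_k}-z\|$ for every $z\ne q$, together with the nonexpansivity estimate $\|T_\mu x_{n_k}-T_\mu q\|\le\|x_{n_k}-q\|$. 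This demiclosedness is the genuinely new analytic ingredient relative to Theorem \ref{th1}, where demicompactness had supplied strong subsequential convergence outright.

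It then remains only to exploit $Fix\,(T_\mu)=\{p\}$. By the demiclosedness just described, combined with $\|x_n-T_\mu x_n\|\to 0$, every weak cluster point of $\{x_n\}$ is a fixed point of $T_\mu$ and hence equals $p$. A bounded sequence in a Hilbert space all of whose weakly convergent subsequences share the single limit $p$ must itself converge weakly to $p$; this yields $x_n\rightharpoonup p$ and completes the proof. Note that, unlike in Theorem \ref{th1}, no compactness of the iterates is invoked, so only weak — not strong — convergence is obtained.
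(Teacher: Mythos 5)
Your proposal follows the same skeleton as the paper's own proof: pass to the averaged map $T_\mu x=(1-\mu)x+\mu Tx$, which by \eqref{eq5} is nonexpansive and satisfies $Fix\,(T_\mu)=Fix\,(T)=\{p\}$; view the Krasnoselskij scheme as an averaged iteration of $T_\mu$; get asymptotic regularity from Lemma \ref{lem1}, hence $\|x_n-T_\mu x_n\|\to 0$; use weak compactness of the bounded closed convex set $C$ to extract weak cluster points; show each such cluster point lies in $Fix\,(T_\mu)$; and finish with the uniqueness hypothesis. The only place where you genuinely depart from the paper is the crucial step that weak cluster points are fixed points: you invoke demiclosedness of $I-T_\mu$ at $0$, proved via Opial's property, whereas the paper establishes the same fact by a self-contained Hilbert-space computation, expanding $\|x_{n_j}-U_\lambda p_0\|^2=\|x_{n_j}-p_0\|^2+\|p_0-U_\lambda p_0\|^2+2\langle x_{n_j}-p_0,\,p_0-U_\lambda p_0\rangle$ (the cross term vanishes by weak convergence) and combining this with the estimate $\limsup\left(\|x_{n_j}-U_\lambda p_0\|-\|x_{n_j}-p_0\|\right)\leq 0$ coming from nonexpansivity plus asymptotic regularity, together with the boundedness of $C$. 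The two routes are interchangeable in a Hilbert space; yours is shorter if demiclosedness is taken as known and extends to Banach spaces with Opial's property, while the paper's is elementary and cites no external principle.

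There is, however, one concrete flaw in your write-up, precisely at the point you yourself flagged. Rewriting \eqref{eq4a} as $x_{n+1}=(1-\gamma)x_n+\gamma T_\mu x_n$ forces $\gamma=\lambda/\mu$, so you need $\lambda<\mu$; your remedy --- take $b$ in \eqref{eq3} as small as possible so that $\mu=1/(b+1)$ is as large as possible --- does not work in general, because the infimum of admissible constants $b$ can be strictly positive. For the map of Example \ref{ex1}, condition \eqref{eq3} forces $b\geq 3/2$, hence $\mu\leq 2/5$, and your argument then says nothing about $\lambda\in[2/5,1)$. You should know that the paper's proof carries the same restriction, only hidden: it runs the scheme with parameter $\lambda$ applied to $T_\mu$, i.e., it analyzes $x_{n+1}=(1-\lambda\mu)x_n+\lambda\mu Tx_n$, which is \eqref{eq4a} with step size $\lambda\mu\in(0,\mu)$ rather than $\lambda$ itself. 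So what this method actually delivers --- in your version and in the paper's --- is weak convergence of \eqref{eq4a} for every step size in $(0,\mu)$, matching the ``there exists $\lambda$'' formulation of Theorem \ref{th1} rather than the ``any $\lambda\in(0,1)$'' claimed in the statement of Theorem \ref{th2}. Apart from this shared caveat, your argument is correct and complete.
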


\begin{proof}
We use similar arguments to those in the proof of Theorem \ref{th1} to show that $T_\mu x=(1-\mu) x+\mu T x$, which map $C$ into $C$  is nonexpansive. Note also that, in view of Lemma \ref{lem1}, $Fix\,(T_\mu) =Fix\,(T)=\{p\}$.

To prove the theorem, it suffices to show that if  $\{x_{n_j}\}_{j=0}^{\infty}$ given by 
$$
x_{n_j+1}=(1-\lambda) x_{n_j}+\lambda T_\mu x_{n_j},\,j\geq 0,
$$
converges weakly to a certain $p_0$, then $p_0$ is a fixed point of $T_\mu$ (and of $U_\lambda=(1-\lambda) I+\lambda T_\mu$) and hence of $T$ and therefore $p_0=p$. 

Suppose that $\{x_{n_{j}}\}_{j=0}^{\infty}$  does not converge weakly to $p$. 

Using the same arguments like in the proof of Theorem \ref{th1}, we obtain that $U_\lambda$ is nonexpansive and asymptotically regular, that is,
$$
\|x_{n_j}-T_\mu x_{n_j}\|\rightarrow 0,\, \textnormal{ as } j\rightarrow \infty.
$$
On the other hand
$$
\|x_{n_j}-U_\lambda p_0\|\leq \|U_\lambda x_{n_j}-U_\lambda p_0\|+\|x_{n_j}-U_\lambda x_{n_j}\|
$$
$$
\leq \|x_{n_j}-p_0\|+\|x_{n_j}-U_\lambda x_{n_j}\|,
$$
which implies that
\begin{equation} \label{eq7}
\limsup \left(\|x_{n_j}-U_\lambda p_0\|-\|x_{n_j}-p_0\| \right)\leq 0.
\end{equation}
Similarly to the proof of Theorem \ref{th1}, we have
$$
\|x_{n_j}-U_\lambda p_0\|^2=\|(x_{n_j}-p_0)+(p_0-U_\lambda p_0\|^2
$$
$$
=\|x_{n_j}-p_0\|^2+\|p_0-U_\lambda p_0\|^2+2\langle x_{n_j}-p_0,p_0-U_\lambda p_0\rangle,
$$
which, together with the fact that $\{x_{n_j}\}$ converges weakly to $p_0$, implies
\begin{equation} \label{eq8}
\lim_{j\rightarrow \infty} \left[\|x_{n_j}-U_\lambda p_0\|^2-\|x_{n_j}-p_0\|^2\right]=\|p_0-U_\lambda p_0\|^2.
\end{equation}
We also have
$$
\|x_{n_j}-U_\lambda p_0\|^2-\|x_{n_j}-p_0\|^2=\left(\|x_{n_j}-U_\lambda p_0\|-\|x_{n_j}-p_0\|\right)\cdot
$$
\begin{equation} \label{eq9}
\left(\|x_{n_j}-U_\lambda p_0\|+\|x_{n_j}-p_0\|\right).
\end{equation}
Since $C$ is bounded, the sequence $\{\|x_{n_j}-U_\lambda p_0\|+\|x_{n_j}-p_0\|\}$ is bounded, too, and therefore by combining \eqref{eq7}, \eqref{eq8} and \eqref{eq9}, we get
$$
\|p_0-U_\lambda p_0\|=0,
$$
that is,
$$
U_\lambda p_0=p_0,
$$
which implies
$$
 p_0\in Fix\,(U_\lambda)=Fix\,(T_\mu)=Fix\,(T)=\{p\}.
$$
\end{proof}

\begin{remark}\label{rem3}
1) Theorem \ref{th2} is an extension of Theorem 7 in Browder and Petryshyn \cite{BroP67}, see also Theorem 3.3 in \cite{Ber07}, by considering enriched nonexpansive mappings instead of nonexpansive mappings.

2) The assumption $Fix\,(T)=\{p\}$ in Theorem \ref{th2} may be removed to obtain the following more general result.
\end{remark}

\begin{theorem}  \label{th3}
Let $C$ be a bounded closed convex
subset of a Hilbert space $H$ and  $T:C\rightarrow C$ be an enriched nonexpansive operator. Then, for any given $x_0\in C$ and any fixed number $\lambda$, $0<\lambda<1$, the Krasnoselskij iteration $\{x_n\}_{n=0}^{\infty}$ given by \eqref{eq4a}
converges weakly to a fixed point of $T$.
\end{theorem}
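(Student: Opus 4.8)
The plan is to follow verbatim the reduction already used in the proofs of Theorem \ref{th1} and Theorem \ref{th2}. Setting $\mu=\dfrac{1}{b+1}\in(0,1]$ and $T_\mu x=(1-\mu)x+\mu Tx$, the $b$-enriched nonexpansiveness of $T$ is equivalent to the nonexpansiveness of $T_\mu$, and by Lemma \ref{lem1} we have $Fix\,(T_\mu)=Fix\,(T)$. Writing $U_\lambda=(1-\lambda)I+\lambda T_\mu$, the iteration \eqref{eq4a} takes the form $x_{n+1}=U_\lambda x_n$, and Lemma \ref{lem1} guarantees that $U_\lambda$ is nonexpansive and asymptotically regular, so that $\|x_n-T_\mu x_n\|\to 0$ as $n\to\infty$. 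Since $C$ is bounded, $\{x_n\}$ is bounded, and by the Browder-Goede-Kirk fixed point theorem $Fix\,(T_\mu)\neq\emptyset$.

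Because the singleton hypothesis $Fix\,(T)=\{p\}$ of Theorem \ref{th2} is now dropped, I can no longer identify every weak subsequential limit with one prescribed point; instead I would secure weak convergence of the whole sequence from two ingredients. First, Fej\'er monotonicity: for each $w\in Fix\,(T_\mu)$ the nonexpansiveness of $U_\lambda$ gives $\|x_{n+1}-w\|=\|U_\lambda x_n-U_\lambda w\|\leq\|x_n-w\|$, so $\{\|x_n-w\|\}$ is nonincreasing and $\lim_n\|x_n-w\|$ exists for every such $w$. Second, demiclosedness at zero: repeating the inner-product expansion of $\|x_{n_j}-U_\lambda p_0\|^2$ carried out in Theorem \ref{th2}, together with asymptotic regularity and the boundedness of $C$, shows that every weak subsequential limit $p_0$ of $\{x_n\}$ satisfies $U_\lambda p_0=p_0$, hence $p_0\in Fix\,(U_\lambda)=Fix\,(T_\mu)=Fix\,(T)$.

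To upgrade subsequential weak convergence to weak convergence of the entire sequence, I would invoke Opial's property of Hilbert spaces. Suppose, for contradiction, that two subsequences of $\{x_n\}$ converge weakly to distinct limits $p_1\neq p_2$; by the previous paragraph both belong to $Fix\,(T_\mu)$, and by Fej\'er monotonicity the real limits $\ell_1=\lim_n\|x_n-p_1\|$ and $\ell_2=\lim_n\|x_n-p_2\|$ exist. Applying Opial's inequality along the first subsequence yields $\ell_1<\ell_2$, while applying it along the second yields $\ell_2<\ell_1$, a contradiction. Hence all weak subsequential limits coincide, and the bounded sequence $\{x_n\}$ converges weakly to a single point $p\in Fix\,(T)$.

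The main obstacle I anticipate is precisely this final step: the earlier proofs never use Opial's condition, since the singleton assumption of Theorem \ref{th2} forced every weak cluster point to equal $p$ automatically. Here, establishing uniqueness of the weak cluster point genuinely requires the additional geometric structure of Hilbert space supplied by Opial's property, combined with the Fej\'er monotonicity of $\{\|x_n-w\|\}$. Everything else is a direct transcription of the machinery already developed for Theorems \ref{th1} and \ref{th2}.
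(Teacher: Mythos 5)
Your proposal is correct, and its opening reduction (passing to $T_\mu$ and $U_\lambda$, Fej\'er monotonicity of $\|x_n-w\|$ for $w\in Fix\,(T_\mu)$, and demiclosedness of $I-U_\lambda$ at zero via the inner-product expansion from Theorem \ref{th2}) coincides with what the paper does. Where you genuinely diverge is the final step that identifies a single weak limit. The paper, after establishing Fej\'er monotonicity, introduces the function $g(p)=\lim_{n\rightarrow\infty}\|x_n-p\|$ on the nonempty closed \emph{convex} set $Fix\,(T_\mu)$, observes that it is lower semicontinuous and convex, and then defers to the minimization (asymptotic-center) argument of Theorem 8 in Browder and Petryshyn \cite{BroP67}: $g$ attains a unique minimizer on $Fix\,(T_\mu)$, and every weak cluster point of $\{x_n\}$ is forced to coincide with that minimizer. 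You instead invoke Opial's property of Hilbert spaces: two distinct weak cluster points $p_1\neq p_2$, both fixed points by demiclosedness, would give $\ell_1<\ell_2$ and $\ell_2<\ell_1$ via the existing limits $\ell_i=\lim_n\|x_n-p_i\|$, a contradiction; weak sequential compactness of the bounded set $C$ then upgrades this to weak convergence of the whole sequence. Both routes are standard and rest, at bottom, on the same Hilbert-space identity, but they buy different things: your Opial argument does not need the convexity of $Fix\,(T_\mu)$ nor any minimization step, and it is fully self-contained modulo Opial's lemma, whereas the paper's route exploits the convex structure of the fixed point set and names the weak limit explicitly (the minimizer of $g$, i.e., the asymptotic center of $\{x_n\}$ relative to $Fix\,(T_\mu)$), at the cost of leaving the decisive portion of the argument to the cited reference.
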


\begin{proof}
By the arguments in the proof of Theorem \ref{th1}, we have  $Fix\,(T)=Fix\,(T_\mu)\neq \emptyset$, where, as usually,  $T_\mu x=(1-\mu) x+\mu T x$. Moreover, by Theorem \ref{th1}, $Fix\,(T_\mu)$ is convex. Since $T_\mu$ is nonexpansive, for any $p\in Fix\,(T_\mu)$ and for each $n$ we have
$$
\|x_{n+1}-p\|\leq \|x_{n}-p\|,\,n\geq 0,
$$
which shows that the function
$$
g(p)=\lim_{n\rightarrow \infty} \|x_{n}-p\|,\,p\in Fix\,(T_\mu),
$$
is well defined and is a lower semicontinuous convex function on $Fix\,(T_\mu)$. 

The rest of the proof is similar to that of Theorem 8 in Browder and Petryshyn \cite{BroP67} and therefore is omitted.
\end{proof}

\section{Conclusions and further study}

In this paper  we introduced and studied the class of {\it enriched nonexpansive mappings}  in the setting of a Hilbert space $H$. We have shown that any enriched nonexpansive mapping defined on a bounded, closed and convex subset $C$ of $H$ has  fixed points in $C$ and that the set of all its fixed points is convex. In order to approximate a fixed point of an enriched nonexpansive mapping,  we used the Krasnoselskij iteration for which we have proven a strong convergence result (Theorem \ref{th1}) as well as two weak convergence theorems (Theorems \ref{th2} and \ref{th3}). 

We illustrated the richness of the new class of mappings  by means of Examples \ref{ex1} and Corollary \ref{cor2}. We conclude that  all nonexpansive mappings are included in the class of enriched nonexpansive mappings, which is also independent of that of quasinonexpansive mappings (and which includes all nonexpansive mappings possessing fixed points). Moreover, see the proof of Corollary \ref{cor2}, all Lipschitz and generalized pseudocontractive mappings  are also included in the class of {\it enriched nonexpansive mappings}. Note also that, similarly to the case of nonexpansive mappings, any enriched nonexpansive mapping is continuous. 

Our results extend some convergence theorems in \cite{BroP67} from nonexpansive mappings to enriched nonexpansive mappings and thus include many other important related results from literature as particular cases, see \cite{Ber02}, \cite{Ber02a}, \cite{Ber07}, \cite{BroP66}, \cite{Chi}, \cite{Kra55}, \cite{Pet}, \cite{Sch} etc. 

For some other old and recent related developments related to nonexpansive type mappings we refer to \cite{Ag}-\cite{BerP09},  \cite{Bro65}-\cite{Xu16} and references therein. 

As mentioned in the Introduction, the study of nonexpansive mappings attracted a large number of researchers who contributed significantly to the development of this area of research. As a consequence, various single-valued and multi-valued self and nonself mappings related to nonexpansive mappings were introduced and studied independently of in connection to nonexpansive mappings (see MathScinet or  zbMATH): quasi-nonexpansive mappings (see \cite{Dot}), strictly quasi-nonexpansive mappings (see \cite{Tri}), firmly nonexpansive mappings (see \cite{Bru73}), asymptotically nonexpansive mappings (see \cite{Goe72}), asymptotically quasi-nonexpansive mappings, generalized asymptotically nonexpansive mappings, generalized nonexpansive mappings (see \cite{Goe73}), $\alpha$-nonexpansive mappings (see \cite{Ao11}), Suzuki nonexpansive mappings (\cite{Suz}), Suzuki generalized nonexpansive mappings (\cite{Fal}), pseudocontractive  mappings and strictly $k$-pseudocontractive  mappings (see \cite{Bro65c}), $k$-demicontractive mappings (\cite{Mar73}, \cite{Mar77}, \cite{Hic}), Bregman nonexpansive mappings, Bregman strongly nonexpansive mappings (see \cite{Reich10}), Bregman relatively nonexpansive mappings, affine nonexpansive mappings, weakly nonexpansive mappings (\cite{BerK}), Berinde nonexpansive mappings (\cite{Bun18}-\cite{Bus18}), Pre\v si\' c nonexpansive mappings (\cite{BerP09}), Pre\v si\' c-Kannan nonexpansive mappings (\cite{Fuk}), nearly nonexpansive mappings (see \cite{Sahu}), $G$-nonexpansive mappings, iterated nonexpansive mappings (see \cite{Ben}), $I$-nonexpansive mappings, $Q$-nonexpansive mappings, $\Phi$-nonexpansive mappings, ($L$)-type mappings (see \cite{Llo}), non-spreading mappings (see \cite{Koh}), hybrid mappings (see \cite{Tak10}), $\lambda$-hybrid mappings (see \cite{Ao}) etc.

So, a challenging problem would be to establish the relationships between the class of enriched nonexpansive mappings, on the one side, and most of the classes of nonexpansive-type mappings mentioned above, on the other side, excepting of course nonexpansive mappings and quasi-nonexpansive mappings, whose relations to enriched nonexpansive mappings have been already established.

\vskip 0.5 cm {\it Department of Mathematics and Computer Science

North University Center at Baia Mare

Technical University of Cluj-Napoca 

Victoriei 76, 430122 Baia Mare ROMANIA

E-mail: vberinde@cunbm.utcluj.ro}

\vskip 0.5 cm {\it Academy of Romanian Scientists  (www.aosr.ro)

E-mail: vasile.berinde@gmail.com}


\begin{thebibliography}{99}

\bibitem{Ag} Agarwal, R. P., O'Regan, D. and Sahu, D. R., {\it Fixed point theory for Lipschitzian-type mappings with applications}, Topological Fixed Point Theory and Its Applications, 6. Springer, New York, 2009.

\bibitem{Alg} Alghamdi, Maryam A., Alghamdi, M. A.,  Shahzad, N. and Xu, H.-K., {\it The implicit midpoint rule for nonexpansive mappings}, Fixed Point Theory Appl. {\bf 2014}, 2014:96, 9 pp.

\bibitem{Ao} Aoyama, K., Iemoto, S., Kohsaka, F. and Takahashi, W., {\it Fixed point and ergodic theorems for $\lambda$-hybrid mappings in Hilbert spaces}, J. Nonlinear Convex Anal., {\bf 11} (2010), 335--343.

\bibitem{Ao11} Aoyama, K. and Kohsaka, F., {\it Fixed point theorem for $\alpha$-nonexpansive mappings in Banach spaces}, Nonlinear Analysis Series A: Theory, Methods \& Applications, {\bf 74} (2011), No. 13, 4387--4391.

\bibitem{Ar} Ariza-Ruiz, D., Hern\' andes Linares, C., Llorens-Fuster, E. and Moreno-G\' alvez, E., {\it On $\alpha$-nonexpansive mappings in Banach spaces}, Carpathian J. Math. {\bf 32} (2016), no. 1, 13--28.

\bibitem{Bae} Bae, J. S., {\it Fixed point theorems of generalized nonexpansive mappings}, J. Korean Math. Soc., {\bf 21} (1984), no. 2, 233--248.


\bibitem{Ber99}  Berinde, V., {\it Fixed point theorems for nonexpansive operators on non convex sets}, Bul. \c Stiin\c t. Univ. Baia Mare Ser. B {\bf 15} (1999), no. 1-2, 27--31.

\bibitem{Ber02}  Berinde, V., {\it Approximating fixed points of Lipschitzian generalized pseudo-contractions}. Mathematics \& mathematics education (Bethlehem, 2000), 73--81, World Sci. Publ., River Edge, NJ, 2002.

\bibitem{Ber02a}  Berinde, V., {\it Iterative approximation of fixed points for pseudo-contractive operators}. International Conference on Nonlinear Operators, Differential Equations and Applications (Cluj-Napoca, 2001). Semin. Fixed Point Theory Cluj-Napoca {\bf 3} (2002), 209--215.

\bibitem{Ber04a} Berinde, V., \textit{Picard iteration converges faster than Mann iteration for a class of quasi-contractive operators}, Fixed Point Theory Appl., 2004, No. 2, 97--105

\bibitem{Ber07}  Berinde, V., {\it Iterative approximation of fixed points}, Second edition. Lecture Notes in Mathematics, 1912. Springer, Berlin, 2007.

\bibitem{Ber16}  Berinde, V., {\it On a notion of rapidity of convergence used in the study of fixed point iterative methods}, Creat. Math. Inform. {\bf 25}  (2016), no. 1, 29--40.

\bibitem{Ber19d}  Berinde, V., {\it Weak and strong convergence theorems for enriched strictly pseudocontractive operators in Hilbert spaces}  (submitted)

\bibitem{Ber19e}  Berinde, V., {\it Approximating fixed points of enriched nonexpansive mappings by Krasnoselskij-Mann iteration in Banach spaces}  (submitted)

\bibitem{BerK}  Berinde, V., Khan, A. R. and P\u acurar, M., {\it Coupled solutions for a bivariate weakly nonexpansive operator by iterations}, Fixed Point Theory Appl. {\bf 2014}, 2014: 149, 12 pp.

\bibitem{BerP09}  Berinde, V. and P\u acurar, M., {\it An iterative method for approximating fixed points of Pre\v si\' c nonexpansive mappings}, Rev. Anal. Num\' er. Th\' eor. Approx. {\bf 38} (2009), no. 2, 144--153.

\bibitem{BerP19}  Berinde, V. and P\u acurar, M., {\it Fixed point theorems of Kannan type mappings with applications to split feasibility  and variational inequality problems}  (submitted)

\bibitem{Pac19} Berinde, V. and P\u acurar, M., {\it Approximating fixed points of enriched Banach contractions} (submitted)

\bibitem{Pac19b} Berinde, V. and P\u acurar, M., {\it Approximating fixed points of enriched Chatterjea mappings} (submitted)

\bibitem{Bog} Bogin, J., {\it A generalization of a fixed point theorem of Goebel, Kirk and Shimi}, Canad. Math. Bull., {\bf19} (1976), 7--12.

\bibitem{Bro65} Browder, F. E., {\it Nonexpansive nonlinear operators in a Banach space}, Proc. Nat. Acad. Sci. U.S.A., {\bf 54} (1965) 1041--1044.

\bibitem{Bro65a} Browder, F. E., {\it Fixed point theorems for nonlinear semicontractive mappings in Banach paces}, Arch. Rational Mech. Anal. {\bf 21} (1966) 259--269.

\bibitem{Bro65b} Browder, F. E., {\it Convergence of approximants to fixed points of nonexpansive nonlinear maps in Banach spaces}, Arch. Rat. Mech. Anal., {\bf 24} (1967) 82--90.

\bibitem{Bro65c} Browder, F. E., {\it Nonlnear mappings of nonexpansive and accretive type in Banach spaces}, Bull. Amer. Math. Soc. {\bf 73} (1967) 875--882.

\bibitem{Bro65d} Browder, F. E., {\it Semicontractive and semiaccretive nonlinear mappings in Banach spaces}, Bull. Amer. Math. Soc. {\bf 74} (1968) 660--665.

\bibitem{BroP66} Browder, F. E. and Petryshyn, W. V., {\it The solution by iteration of nonlinear functional equations in Banach spaces}, Bull. Amer. Math. Soc. {\bf 72} (1966), 571--575.

\bibitem{BroP67} Browder, F. E. and Petryshyn, W. V., {\it Construction of fixed points of nonlinear mappings in Hilbert space}, J. Math. Anal. Appl. {\bf 20} (1967), 197--228.

\bibitem{Bru73} Bruck, R.E., {\it Nonexpansive projections on subsets of Banach spaces}, Pacific J. Math., {\bf 47} (1973), 341--355.

\bibitem{Bru} Bruck, R.E., {\it Asymptotic behavior of nonexpansive mappings}, Contemporary Mathematics, 18, Fixed Points and Nonexpansive Mappings, (R.C. Sine, Editor), AMS, Providence, RI, 1980.

\bibitem{Bun18} Bunlue, N. and Suantai, S., {\it Best proximity point for proximal Berinde nonexpansive mappings on starshaped sets}, Arch. Math. (Brno) {\bf 54} (2018), no. 3, 165--176.

\bibitem{Bun19} Bunlue, N. and Suantai, S., {\it Existence and convergence theorems for Berinde nonexpansive multivalued mapping on Banach spaces}, Afr. Mat. {\bf 30} (2019), no. 3-4, 483--494.

\bibitem{Bus18} Bussaban, L. and Kettapun, A., {\it Common fixed points of an iterative method for Berinde nonexpansive mappings}, Thai J. Math. {\bf 16} (2018), no. 1, 49--60. 

\bibitem{Byr} Byrne, C., {\it A unified treatment of some iterative algorithms in signal processing and image reconstruction}, Inverse Problems {\bf 20} (2004), no. 1, 103--120.


\bibitem{Chi} Chidume, C. E., {\it Geometric properties of Banach spaces and nonlinear iterations}. Lecture Notes in Mathematics, 1965, Springer-Verlag London, Ltd., London, 2009.

\bibitem{Chid06} Chidume, C. E. and  Chidume, C. O., {\it Iterative approximation of fixed points of nonexpansive mappings}, J. Math.  Anal. Appl. {\bf 318} (2006), no. 1, 288--295.

\bibitem{Chid18} Chidume, C. E. and Nnakwe, M. O., {\it A new Halpern-type iteration algorithm for a generalized mixed equilibrium problem  and a countable family of generalized nonexpansive-type maps},  Carpathian J. Math. {\bf 34} (2018), no. 2, 191--198.

\bibitem{Deu} Deutsch, F. and Yamada, I., {\it Minimizing certain convex functions over the intersection of the fixed point sets of nonexpansive
mappings}, Numer. Funct. Anal. Optim. {\bf 19} (1998) 33--56.

\bibitem{Ben} Dom\' inguez Benavides, T., Llorens-Fuster, E., {\it Iterated nonexpansive mappings}, J. Fixed Point Theory Appl. {\bf 20} (2018), no. 3, Art. 104, 18 pp. 

\bibitem{Dom} Dhompongsa, S. and Nanan, N., {\it Fixed point theorems by ways of ultra-asymptotic centers}, Abstr. Appl. Anal.,
{\bf 2011}, Art. ID 826851, 21 pp.

\bibitem{Dot} Dotson, W. G., Jr., {\it Fixed points of quasi-nonexpansive mappings}, J. Austral. Math. Soc. {\bf 13} (1972), 167--170.


\bibitem{Ede66} Edelstein, M., {\it A remark on a theorem of M. A. Krasnoselski}, Amer. Math. Monthly {\bf 73} (1966), 509--510.

\bibitem{Fal} Falset, J. G., Llorens-Fuster, E. and Suzuki, T., {\it Fixed point theory for a class of generalized nonexpansive mappings}, J. Math. Anal. Appl. {\bf 375} (2011), 185--195. 

\bibitem{Far} Farajzadeh, A., Chuasuk, P., Kaewcharoen, A. and Mursaleen, M., {\it An iterative process for a hybrid pair of generalized $I$-asymptotically nonexpansive single-valued mappings and generalized nonexpansive multi-valued mappings in Banach spaces}, Carpathian J. Math. {\bf 34} (2018), no. 1, 31--45.

\bibitem{Fuk18} Fukhar-ud-din, H. and Berinde, V., {\it Fixed point iterations for Pre\v si\' c-Kannan nonexpansive mappings in product convex metric spaces}, Acta Univ. Sapientiae Math. {\bf 10} (2018), no. 1, 56--69. 

\bibitem{Fuk16} Fukhar-ud-din, H., Berinde, V. and  Khan, A. R., {\it Fixed point approximation of Pre\v si\' c nonexpansive mappings in product of $CAT(0)$ spaces}, Carpathian J. Math. {\bf 32} (2016), no. 3, 315--322. 

\bibitem{Goe72} Goebel, K. and Kirk, W. A., {\it A fixed point theorem for asymptotically nonexpansive mappings}, Proc. Amer. Math. Soc. {\bf 35} (1972), 171--174.

\bibitem{Goe73} Goebel, K., Kirk, W. A. and Shimi, T. N., {\it A fixed point theorem in uniformly convex spaces}, Boll. Un. Mat. Ital.,
{\bf 7} (1973), 67--75.

\bibitem{Goe} Goebel, K. and Kirk, W. A., {\it Topics in metric fixed point theory}. Cambridge Studies in Advanced Mathematics, 28. Cambridge University Press, Cambridge, 1990.  

\bibitem{Hal} Halpern, B., {\it Fixed points of nonexpanding maps}, Bull. Amer. Math. Soc. {\bf 73} (1967), 957--961.

\bibitem{Hic} Hicks, T. L. and Kubicek, J. D., {\it On the Mann iteration process in Hilbert spaces}, J. Meth. Anal. Appl. {\bf 59} (1977), 498--504. 

\bibitem{Hong} Hong, Y. M. and Huang, Y. Y., {\it On $\lambda$-firmly nonexpansive mappings in nonconvex sets}, Bull. Inst. Math. Acad.
Sinica, {\bf 21} (1993), 35--42.

\bibitem{Jap} Jap\' on-Pineda, M. A. and Goebel, K., {\it On a type of generalized nonexpansiveness}, Proceedings of the 8th Inter-
national Conference of Fixed Point Theory and its Applications, pp. 71--82, Yokohama Publishers, 2008.

\bibitem{Koh} Kohsaka, F. and Takahashi,W., {\it Fixed point theorems for a class of nonlinear mappings related to maximal monotone
operators in Banach spaces}, Arch. Math. (Basel), {\bf 91} (2008), 166--177.

\bibitem{Kos} Kosol, S., {\it Weak and strong convergence theorems of some iterative methods for common fixed points of Berinde nonexpansive mappings in Banach spaces}, Thai J. Math. {\bf 15} (2017), no. 3, 629--639.

\bibitem{Kra55} Krasnosel'ski\v i, M. A., {\it Two remarks about the method of successive approximations}. (Russian) Uspehi Mat. Nauk (N.S.) {\bf 10} (1955), no. 1(63), 123--127. 

\bibitem{Kra56} Krasnoselski, M. A., {\it Two remarks on the method of successive approximations} (Romanian), Acad. R. P. Rom\^ ine An. Rom\^ ino-Soviet. Ser. Mat. Fiz. (3) {\bf 10} (1956), no. 2(17), 55--59.

\bibitem{Llo} Llorens-Fuster, E. and Moreno-Galvez, E., {\it The fixed point theory for some generalized nonexpansive mappings},
Abstr. Appl. Anal. {\bf 2011}, Art. ID 435686, 15 pp.

\bibitem{Xu} L\' opez, G., Mart\' in-M\' arquez, V. and Xu, H.-K., {\it Halpern's iteration for nonexpansive mappings}. Nonlinear analysis and optimization I. Nonlinear analysis, 211--231, Contemp. Math., 513, Israel Math. Conf. Proc., Amer. Math. Soc., Providence, RI, 2010.

\bibitem{Mann}  Mann, W.R., {\it Mean value methods in iteration}, Proc. Amer. Math. Soc., {\bf 4} (1953) 506--510.

\bibitem{Marino} Marino, G. and Zacconea, R.,  {\it Survey: On some Midpoint-type algorithms}, Adv. Theory Nonlinear Anal. Appl. {\bf 2} (2018), no. 1, 42--61.

\bibitem{Mar73} M\u aru\c ster, S.,  {\it Sur le calcul des z\' eros d'un op\' erateur discontinu par it\' eration}, Canad. Math. Bull. {\bf 16} (1973), 541--544.

\bibitem{Mar77} M\u aru\c ster, \c St., {\it The solution by iteration of nonlinear equations in Hilbert spaces}. Proc. Amer. Math. Soc. {\bf 63} (1977), no. 1, 69--73.

\bibitem{Mu} Muangchoo-in, K., Thongtha, D., Kumam, P. and Cho, Y. J., {\it Fixed point theorems and convergence theorems for monotone $(\alpha, \beta)$-nonexpansive mappings in ordered Banach spaces}, Creat. Math. Inform. {\bf 26}  (2017), no. 2, 163--180.

\bibitem{Pak} Pakkaranang, N., Kumam, P., Cholamjiak, P., Suparatulatorn, R. and Chaipunya, P., {\it Proximal point algorithms involving fixed point iteration for nonexpansive mappings in CAT($k$)}, Carpathian J. Math. {\bf 34} (2018), no. 2, 229--237.


\bibitem{Pet} Petryshyn, W. V., {\it Construction of fixed points of demicompact mappings in Hilbert space}, J. Math. Anal. Appl. {\bf 14} (1966), 276--284.

\bibitem{PetW} Petryshyn, W. V.  and Williamson, T. E., {\it Strong and weak convergence of the sequence of successive approximations for
quasi-nonexpansive mapping}, J. Math. Anal. Appl., {\bf 43} (1973), 459--497.

\bibitem{Reich79}  Reich, S., {\it Weak convergence theorems for nonexpansive mappings in Banach spaces}, J. Math. Anal. Appl. {\bf 67}(1979), 274--276.

\bibitem{Reich18}  Reich, S. and Zaslavski, A. J., {\it Convergence of inexact orbits of monotone nonexpansive mappings},  Carpathian J. Math. {\bf 34} (2018), no. 3, 405--410.

\bibitem{Reich10}  Reich, S. and Sabach, S., {\it Two strong convergence theorems for Bregman strongly nonexpansive operators in reflexive Banach spaces}, Nonlinear Anal. {\bf 73} (2010), no. 1, 122--135.

\bibitem{Sahu} Sahu, D. R., Imdad, M. and Kumar, S., {\it Fixed point iteration process for nonlipschitzian nearly nonexpansive mappings}, J. Indian Math. Soc. (N. S.) {\bf 72} (2005), no. 1-4, 203-210.

\bibitem{Sch} Schaefer, H., {\it \" Uber die Methode sukzessiver Approximationen}. (German), Jber. Deutsch. Math. Verein. {\bf 59} (1957), Abt. 1, 131--140.

\bibitem{Sah} \c Sahin, A. and Ba\c sarir, M., {\it Some convergence results for nonexpansive mappings in uniformly convex hyperbolic spaces}, Creat. Math. Inform. {\bf 26}  (2017), no. 3, 331--338.

\bibitem{Suz}  Suzuki, T., {\it Fixed point theorems and convergence theorems for some generalized nonexpansive mappings}, J. Math. Anal. Appl. {\bf 340} (2008), 1088--1095.

\bibitem{Tak10} Takahashi, W., {\it Fixed point theorems for new nonlinear mappings in a Hilbert space}, J. Nonlinear Convex Anal.,
{\bf 11} (2010), No. 1, 79--88.

\bibitem{Tak11} Takahashi, W. and Yao, J. C., {\it Fixed point theorems and ergodic theorems for nonlinear mappings in Hilbert spaces}, Taiwan J. Math., {\bf 15} (2011), 457--472.


\bibitem{Tri} Tricomi, F., {\it Un teorema sulla convergenza delle successioni formate delle successive iterate di una funzione di una variabile
reale}, Giorn. Mat. Battaglini, {\bf 54} (1916) 1--9.

\bibitem{Ver} Verma, R. U., {\it A fixed-point theorem involving Lipschitzian generalised pseudo-contractions}, Proc. Roy. Irish Sect. A {\bf 97} (1997), no. 1, 83--86.

\bibitem{Wit}  Wittmann, R., {\it Approximation of fixed points of nonexpansive mappings}, Arch. Math., {\bf 58} (1992), 486--491.

\bibitem{Xu02}  Xu, H. K., {\it Another control condition in an iterative method for nonexpansive mappings}, Bull. Austral. Math. Soc. {\bf 65} (2002),
109--113.
\bibitem{Xu04}  Xu, H. K., {\it Viscosity approximation methods for nonexpansive mappings}, J. Math. Anal. Appl. {\bf 298} (2004) 279--291.

\bibitem{Xu15}  Xu, H. K., Alghamdi, M. A. and Shahzad, N., {\it The viscosity technique for the implicit midpoint rule of nonexpansive mappings
in Hilbert spaces}, Fixed Point Theory Appl. {\bf 2015}, 2015: 41, 12 pp.

\bibitem{Xu15a}  Xu, H. K., Alghamdi, M. A. and Shahzad, N., {\it Ergodicity of the implicit midpoint rule for nonexpansive mappings} J. Inequal. Appl. {\bf 2015}, 2015:4, 9 pp.

\bibitem{Xu16}  Xu, H. K., Alghamdi, M. A. and Shahzad, N., {\it The implicit midpoint rule for nonexpansive mappings in Banach spaces}, Fixed Point Theory {\bf 17} (2016), no. 2, 509--517.

\end{thebibliography}
\end{document}